\numberwithin{equation}{section}
\newtheorem{theorem}{Theorem}[section]
\newtheorem{lemma}[theorem]{Lemma}
\newtheorem{proposition}[theorem]{Proposition}
\theoremstyle{definition}
\newtheorem{definition}[theorem]{Definition}
\newtheorem{remark}[theorem]{Remark}
\theoremstyle{remark}
\newcommand\mynote[1]{\marginpar{\ \\ \small \tt #1}}
\newcommand\bel[1]{{\mynote{#1}}\begin{equation}\label{#1}}
\newcommand\mylabel[1]{\label{#1}}
\newcommand{\ZZ}{\mathbb{Z}}
\renewcommand{\AA}{\mathbb{A}}
\newcommand{\GG}{\mathbb{G}}
\newcommand  {\shO}     {\mathcal{O}}
\newcommand  {\Br}      {\operatorname{Br}}
\newcommand  {\et}      {{\text{\rm \'{e}t}}}
\newcommand  {\GL}      {\operatorname{GL}}
\newcommand  {\lra}     {\longrightarrow}
\newcommand  {\Pic}     {\operatorname{Pic}}
\newcommand  {\PGL}     {\operatorname{PGL}}
\newcommand  {\ra}      {\rightarrow}
\newcommand  {\Spec}    {\operatorname{Spec}}
\newcommand {\gal}      {{\operatorname{Gal}}}
\def\mydate{\number\day\space\ifcase\month \or January\or February\or March\or
April\or May\or June\or July\or
August\or September\or October\or November\or December\fi \space\number\year}
\begin{document}

\title[ Azumaya algebras over unramified extensions of function fields ]
      {Azumaya algebras over unramified extensions of function fields}

\author[Mohammed Moutand]{Mohammed Moutand}
\address{ Moulay Ismail University, 
 Department of mathematics, 
Faculty of sciences,  Mekn\`es,  B.P. 11201 Zitoune,  Mekn\`es,  Morocco.}
\curraddr{}
\email{m.moutand@edu.umi.ac.ma}



\begin{abstract}
 Let $X$ be a smooth  variety over a field $K$ with function field $K(X)$. Using  the interpretation of the torsion part of the \'etale cohomology group  $ H_\et^2(K(X),  \GG_{m})$  in terms of Milnor-Quillen algebraic $K$-group $K_2(K(X))$, we prove that
under mild conditions on the norm maps along  unramified  extensions of $K(X)$  over $X$, there exist  cohomological Brauer classes  in $   H_\et^2(X,  \GG_{m})$ that are representable by  Azumaya algebras on $X$. Theses conditions  are almost  satisfied   in the case of number fields, providing then,  a partial answer on a question  of  Grothendieck.
\end{abstract}

\maketitle

\section{Introduction}
Let $X$ be a locally noetherian scheme, and let  $\Br'(X) := H_{\et}^2(X, \GG_{m})_{\rm tors}$  be the  cohomological Brauer group of $X$.  In \cite{GR1} Grothendieck  raised the question of  whether any Brauer class $\alpha \in \Br'(X)$ is representable by an Azumaya algebra on $X$, that is when $\alpha$ lies in the Brauer group $\Br(X)$. The answer to  this question  is known to be positive for some special cases, among which: regular schemes of dimension $\leq 2$ (Grothendieck \cite[II]{GR1}), affine schemes and separated unions of two affine schemes (Gabber \cite{GBBR}), abelian varieties (Hoobler \cite{HOO1} and Berkovich \cite{BERK}) and  separated normal algebraic surfaces (Schr\"oer \cite{SCH1}). The most general solved  case, which  includes quasi-projective varieties and, in particular, abelian varieties, is due to Gabber (unpublished), where he proved the conjecture for any scheme admitting an ample invertible  sheaf (see \cite{DJNG} or \cite[Section 4.2]{colliot2} for an alternative proof by de Jong). In \cite{MTD,MTD1}, the author gave positive answers for some  classes of schemes in terms of the \'etale homotopy type and pro-finite \'etale covers. For counterexamples, a non-separated normal surface for which \(\operatorname{Br}(X) \neq \operatorname{Br}'(X)\) was constructed in \cite{EHKV} using stack theory (see also \cite{Ber} for a simple alternative proof).

In this note,  we investigate the question for  varieties over a field $K$ under a  condition on the norm map of algebraic $K_2$-groups of finite extensions of $K$, which is almost satisfied in the number field case.

Let us first fix some notations: For a field $F$ and an integer $n \geq 0$,  let $K^M_n(F)$ denote the $n$-th Milnor algebraic $K$-group of $F$ (cf. \cite[Chapter 7]{giltmas}). For $n=2$, this group agrees with the second Quillen algebraic $K$-group $K_2(F)$ in the sense of \cite{webl}.

Now fix a  field $K$. Let $X$ be a  regular integral  scheme of finite type over  $K$, with function field $K(X)$, and $p$ an integer prime to $char(K)$. Let   $\alpha  \in \Br'(X)_p$ be a  $p$-torsion Brauer class. It can be identified with its image in  $H^2_\et(K(X),\mu_{p})$  (see Lemma \ref{lift}). If moreover $K$  contains a primitive $p$-th root of unity,   we  can again  identify $\beta$ with its image in $K_2(K(X))_p:=K_2(K(X))/pK_2(K(X))$ by the norm residue isomorphism (Theorem \ref{norm}). We express this identification through the following  composition:

\begin{equation}\label{identif}
\Br'(X)_p \hookrightarrow   H^2_\et(K(X),\mu_{p}) \overset{\simeq}\lra K_2(K(X))_p.
\end{equation}
\\
\\

The aim of this note is to prove the following:

\begin{theorem}\label{mainthm}

Let $K$ be a field that  contains a primitive $p$-th root of unity $\zeta_{p}$, with $p \neq char(K)$. Let $X$ be a smooth  variety over  $K$, and let  $\alpha  \in \Br'(X)_p$  be a $p$-torsion cohomological  Brauer class. Suppose that there is a finite  Galois extension $L/K$ such that:

\begin{itemize}

\item[(i)]   $p$ divides  $[L:K]$,
\item[(ii)] the norm map $N_{L/K}: K_2(L) \ra K_2(K) $ is surjective,
\item[(iii)] the image of  $\alpha$  in $K_2(K(X))$ (under the  identifications \eqref{identif}) comes from $K_2(K)$.
\end{itemize}
Then the class  $\alpha$ lies in $\Br(X)$.
\end{theorem}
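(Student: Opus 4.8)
The plan is to show that hypothesis (iii) forces $\alpha$ to be a \emph{constant} class, i.e.\ to lie in the image of the pullback $\pi^{*}\colon\Br(K)\to\Br'(X)$ along the structure morphism $\pi\colon X\to\Spec K$; once this is established the conclusion is immediate, since a class in $\Br(K)$ is represented by a central simple $K$-algebra $A$, and then $A\otimes_{K}\shO_{X}$ is an Azumaya algebra on $X$ whose Brauer class is $\pi^{*}[A]=\alpha$. Hypotheses (i) and (ii) will enter by letting us describe this $A$ explicitly: writing the relevant class of $K_{2}(K)$ as a norm $N_{L/K}(\delta)$, we may take $A$ to be the corestriction $N_{L/K}(B)$ of a central simple $L$-algebra $B$ attached to $\delta$, so that $\alpha$ visibly arises from the unramified (finite \'etale) cover $X_{L}=X\times_{K}L\to X$.

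In detail, I would first unwind \eqref{identif}. By Lemma~\ref{lift} the class $\alpha$ is identified, via restriction to the generic point and the Kummer isomorphism $H^{2}_{\et}(K(X),\mu_{p})\iso\Br(K(X))[p]$ (the Picard group of a field vanishing), with a class $\bar\alpha\in H^{2}_{\et}(K(X),\mu_{p})$, and then, via the norm residue isomorphism (Theorem~\ref{norm}, using $\zeta_{p}\in K$ and the twist $\mu_{p}^{\otimes2}\iso\mu_{p}$), with the corresponding element of $K_{2}(K(X))_{p}$. Condition (iii) gives $\gamma\in K_{2}(K)$ mapping to that element, and (ii) gives $\delta\in K_{2}(L)$ with $N_{L/K}(\delta)=\gamma$. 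Applying the norm residue isomorphism and the Kummer sequence over $K$ and over $L$ produces $\alpha_{K}\in\Br(K)[p]$ and $\alpha_{L}\in\Br(L)[p]$ with $\mathrm{cor}_{L/K}(\alpha_{L})=\alpha_{K}$, since modulo $p$ the corestriction on $K_{2}$ corresponds under norm residue to $N_{L/K}$. Functoriality of norm residue and of the Kummer sequence along $K\hookrightarrow K(X)$ then shows that the image of $\alpha_{K}$ in $\Br(K(X))$ equals $\bar\alpha$; equivalently, $\pi^{*}\alpha_{K}$ and $\alpha$ have the same image under the restriction $\Br'(X)\to\Br(K(X))$.

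The decisive step is to upgrade this equality at the generic point to an equality on all of $X$, and here I would invoke Grothendieck's theorem \cite{GR1} that the restriction map $\Br'(X)\hookrightarrow\Br(K(X))$ is injective when $X$ is regular and integral. A smooth $K$-variety is regular and integral, so this applies and gives $\alpha=\pi^{*}\alpha_{K}$ in $\Br'(X)$. Choosing a central simple $K$-algebra $A$ with $[A]=\alpha_{K}$ (e.g.\ $A=N_{L/K}(B)$ with $[B]=\alpha_{L}$), the Azumaya algebra $A\otimes_{K}\shO_{X}$ on $X$ has Brauer class $\pi^{*}[A]=\alpha$, so $\alpha\in\Br(X)$. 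I expect the real work to be bookkeeping: checking that all the identifications in \eqref{identif} and their analogues over $K$ and $L$ (norm residue, the $\zeta_{p}$-twist $\mu_{p}^{\otimes2}\iso\mu_{p}$, the Kummer sequence, and base change along $K\hookrightarrow K(X)$) are mutually compatible, and handling the case where $K$ is not algebraically closed in $K(X)$ — writing $K'$ for the algebraic closure of $K$ in $K(X)$, the morphism $\pi$ factors through $\Spec K'$ and $L\otimes_{K}K(X)$ may split as a product of fields, so that $X_{L}\to X$ is a disjoint union of integral schemes and the corestriction step must be carried out componentwise. The single genuinely structural ingredient, and the only place smoothness of $X$ is used, is the injectivity $\Br'(X)\hookrightarrow\Br(K(X))$.
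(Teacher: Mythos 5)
Your argument is correct, but it is genuinely different from the paper's --- and in fact it proves something stronger. The paper's proof forms the compositum $M=L\cdot K(X)$, normalizes $X$ in $M$ to obtain a finite \'etale cover $f\colon Y\ra X$ with $K(Y)=L(X)$, and uses hypotheses (i) and (ii) to show that the image of $\alpha$ in $K_2(M)_p$ vanishes (writing the class as a norm from $K_2(L(X))$ and using $i\circ N=\sum_{g\in G}g$ together with $p\mid[L:K]$); it then deduces $\alpha|_Y=0$ from the injectivity of $\Br'(Y)_p\ra H^2_\et(M,\mu_p)$ and concludes by Gabber's descent lemma (Lemma \ref{galois}) for the finite locally free morphism $f$. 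You instead observe that, since the norm residue isomorphism, the $\zeta_p$-twist and the Kummer identification are all compatible with restriction along $K\hookrightarrow K(X)$, hypothesis (iii) says precisely that the generic fibre $\alpha_\eta\in{}_p\Br(K(X))$ lies in the image of ${}_p\Br(K)$; the injectivity of $H^2_\et(X,\GG_m)\ra H^2_\et(K(X),\GG_m)$ for the regular integral quasi-compact $X$ (Lemma \ref{mono}) then upgrades this to $\alpha=\pi^*\alpha_K$ with $\alpha_K\in\Br(K)$, and $A\otimes_K\shO_X$ for a central simple algebra $A$ representing $\alpha_K$ exhibits $\alpha$ in $\Br(X)$. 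This is a complete proof, and --- as you half-concede when you say (i) and (ii) only serve ``to describe $A$ explicitly'' --- it never uses (i), (ii), or the cover $X_L$ at all, so your worries about $L\otimes_K K(X)$ splitting into a product of fields and about carrying out corestriction componentwise are moot. The upshot is that you have shown that (iii) alone (with $\zeta_p\in K$) forces $\alpha$ to be a constant class; this is consistent with the paper (Theorem \ref{mainth2} likewise leans essentially only on (iii)), but the two proofs have different reach: the paper's descent-along-a-finite-cover template via Lemma \ref{galois} is the one that survives when a class is merely split by an unramified extension rather than pulled back from the base field (and is the shape of argument reused in the final section), whereas your route exposes that under hypothesis (iii) as literally stated no such cover is needed.
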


As we will see in the last section, the proposed  assumptions on the ground field $K$ and the extension $L$, can be adjusted to get the  same result with alternative arguments.

Regarding condition $(ii)$, Theorem \ref{mainthm} works almost perfectly when  $K$ is a number field. This is primarily due to the local property of norm maps, which is  independent of finite extensions of $K$ (Theorem \ref{normfield}), along with the descent of Azumaya algebras along base change (Lemma \ref{frobinus}). Note that if $K$ does not contain a primitive $p$-th root of unity, we have only a natural (non necessarily bijective) morphism $ H^2_\et(K(X),\mu_{p}) \lra K_2(K(X))_p$. We prove then, the following theorem.
\begin{theorem}\label{mainth2}
Let $X$ be a smooth  variety over a number field $K$. Let  $\alpha \in \Br'(X)_p$ be a $p$-torsion cohomological  Brauer class. Suppose that the image of $\alpha$ in $K_2(K(X))$   comes from $K_2(K)$, then  $\alpha$ lies in $\Br(X)$.
\end{theorem}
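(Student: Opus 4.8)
The plan is to reduce Theorem~\ref{mainth2} to Theorem~\ref{mainthm} by first adjoining a primitive $p$-th root of unity and then descending along a base change of degree prime to $p$. Set $K'=K(\zeta_p)$, so that $[K':K]$ divides $p-1$, hence is prime to $p$, and $K'$ is again a number field. Since $\Br$ and $\Br'$ of a noetherian scheme are the products over its (finitely many) connected components, we may assume $X$ is geometrically integral over $K$, so that $X_{K'}=X\times_K K'$ is a smooth integral variety over $K'$ with function field $K(X)(\zeta_p)$; the general case follows by arguing component by component. Let $\alpha'\in\Br'(X_{K'})_p$ be the image of $\alpha$ under restriction.

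First I would check that $\alpha'$ satisfies condition~(iii) of Theorem~\ref{mainthm} relative to $K'$. Because $\zeta_p\in K'$, the comparison arrow $H^2_\et(K(X)(\zeta_p),\mu_p)\to K_2(K(X)(\zeta_p))_p$ of~\eqref{identif} is the norm residue isomorphism of Theorem~\ref{norm}. On the other hand, the natural morphism $H^2_\et(K(X),\mu_p)\to K_2(K(X))_p$, the restriction maps $H^2_\et(K(X),\mu_p)\to H^2_\et(K(X)(\zeta_p),\mu_p)$ and $K_2(K(X))\to K_2(K(X)(\zeta_p))$, and the structural maps $K_2(K)\to K_2(K(X))$, $K_2(K')\to K_2(K(X)(\zeta_p))$ fit into a commutative diagram, by functoriality of the Galois symbol and of Milnor $K$-theory in the field. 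Hence the hypothesis that the image of $\alpha$ in $K_2(K(X))_p$ lies in the image of $K_2(K)$ forces the image of $\alpha'$ in $K_2(K(X)(\zeta_p))_p$ to lie in the image of $K_2(K')$, through the chain $K_2(K)\to K_2(K')\to K_2(K(X)(\zeta_p))_p$. This gives condition~(iii).

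It remains to supply the auxiliary extension and to descend. Since $\zeta_p\in K'$, Kummer theory furnishes a cyclic --- hence Galois --- degree-$p$ extension $L=K'(\sqrt[p]{a})/K'$ for any $a\in(K')^\times$ that is not a $p$-th power, so $p\mid[L:K']$, giving~(i). As $L$ and $K'$ are number fields, Theorem~\ref{normfield} yields the surjectivity of the norm map $N_{L/K'}\colon K_2(L)\to K_2(K')$, which is~(ii). Theorem~\ref{mainthm}, applied to the smooth $K'$-variety $X_{K'}$ and the class $\alpha'$, then gives $\alpha'\in\Br(X_{K'})$. Finally, since $[K':K]$ is prime to $p$ and $\alpha$ is killed by $p$, Lemma~\ref{frobinus} --- descent of Azumaya algebras along the base change $X_{K'}\to X$, the associated corestriction multiplying cohomological Brauer classes by $[K':K]$ --- lets us conclude that $\alpha\in\Br(X)$.

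The step I expect to be most delicate is the base-change compatibility invoked in the second paragraph: over $K$ one has only a morphism $H^2_\et(K(X),\mu_p)\to K_2(K(X))_p$, possibly neither injective nor surjective, while over $K'$ the same construction becomes the norm residue isomorphism, and one must verify carefully that ``lying in the image of $K_2(K)$'' really is preserved after extension of scalars to $K'$ --- that is, that the square relating these two comparison maps to the extension $K(X)(\zeta_p)/K(X)$ commutes and is compatible with the inclusion $K\subset K'$ of constant fields. Everything else --- the Kummer construction of $L$, surjectivity of $K_2$-norms between number fields, and the coprime-degree descent --- is either elementary or available from the quoted results.
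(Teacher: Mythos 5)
Your overall strategy --- adjoin $\zeta_p$, transfer the hypothesis to $K'=K(\zeta_p)$, build a Kummer degree-$p$ extension $L/K'$, invoke the number-field norm theorem to get surjectivity of $N_{L/K'}$ on $K_2$, apply Theorem \ref{mainthm} over $K'$, and descend back to $K$ --- is exactly the paper's proof. Your handling of condition (iii) is even slightly cleaner: the paper quotes the Colliot-Th\'el\`ene--Suslin rigidity theorem (Theorem \ref{colliogalois}) for an ``if and only if,'' whereas only the forward implication is needed and, as you observe, that is pure functoriality of the Galois symbol and of $K_2$ in the field.

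Two steps, however, are not right as written. First, the final descent: Remark \ref{frobinus} concerns the Frobenius morphism in positive characteristic and is irrelevant here, and the corestriction argument you sketch (restriction followed by corestriction is multiplication by $[K':K]$, which is prime to $p$) only shows that $\alpha$ is determined by $\alpha'$ as a cohomology class; it does not transfer representability by an Azumaya algebra from $X_{K'}$ down to $X$. The correct tool is Gabber's Lemma \ref{galois} (or Lemma \ref{liftbase}, which packages it): $X_{K'}\to X$ is finite locally free since $K'/K$ is a finite field extension, so $\alpha'\in\Br(X_{K'})$ implies $\alpha\in\Br(X)$ with no coprimality hypothesis at all. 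Second, Theorem \ref{normfield} does \emph{not} assert that $N_{L/K'}$ is surjective; it identifies the cokernel with $\bigoplus_{v\in S_r}K_2(K'_v)_{\rm tors}$ over the real places of $K'$ ramified in $L$. You must argue this vanishes: for $p$ odd this is automatic because $K'\ni\zeta_p$ is totally imaginary, hence has no real places (this is the paper's argument), while for $p=2$ one must additionally choose the Kummer generator $a$ totally positive so that no real place ramifies. Both repairs are immediate from material already in the paper, but as stated your last paragraph does not close the argument.
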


As noted, condition $(iii)$ of Theorem \ref{mainthm} is the only one used in Theorem  \ref{mainth2}, and it relies solely on the $K$-theory of the function field  $K(X)$. Moreover, to verify this condition, we are free to replace the field $K$ by any Galois extension, thanks to a result of Colliot-Th\'el\`ene-Suslin(Theorem \ref{colliogalois}).

The proof of  Theorem \ref{mainthm} is essentially based on a result of  Gabber (Lemma \ref{galois}) on the descent of Azumaya algebras along finite locally free morphisms.  For this,  a splitting finite \'etale cover of the Brauer class $\alpha$, will be provided by the extension $L$, via the  unramified extensions of the  function field $K(X)$ over  $X$ (Definition \ref{unramified}).

Throughout this paper we consider the following notations:

\begin{itemize}
 \item   A variety  over a field $K$ is a  separated integral scheme of finite type over $K$.
  \item  $\GG_{m}$:  the \'etale sheaf of multiplicative groups.
\item $\mu_{n}$: the \'etale sheaf of $n$-th roots of unity.
\item For a scheme $X$, we denote $char(X)$ as the set  of residue characteristics of $X$.

 \item For an abelian group $A$,  notations  ${}_n\! A$ and $A_n$  will stand for the kernel and the cokernel of the homomorphism $a \ra n.a$ of $A$.
\item For a  scheme $X$ over a field  $K$, and $L/  K$ a field extension, $X_L= X \times_K L$ denotes the base change of $X$ to $L$.
\item For a given field $F$,   we will work with the  \'etale cohomology of $\Spec(F)$ with values in the \'etale sheaf $\GG_{m}$ and   $\mu_{n}$,  which agrees  with Galois  cohomology of $F$ with values in the multiplicative group and the group of  $n$-th roots of unity  respectively.

\end{itemize}

\section{Preliminaries}

We will review some preliminary notations  and results to set the stage for proving the main results of this paper.

\subsection{Brauer group and Brauer map} All the materials presented in this subsection can be found  with proofs and details in  \cite{GR1} or \cite[Chapter IV]{MIL} (see also \cite[Chapter 3]{colliot2}).

 Let $X$ be a  scheme. The Brauer group $\Br(X)$ of  $X$ is  defined as the set of classes of Azumaya $\shO_X$-algebras on $X$ modulo similarity equivalence. For a given integer $r \geq 2$, any  Azumaya algebra of rank $r^2$ corresponds isomorphically  to an element of the  \v Cech cohomology set $\check{H}_{\et}^1(X,   \PGL_r(\mathcal{O}_X))$. On the other hand, there is an exact sequence of \'etale sheaves on $X$  
$$ 1 \longrightarrow  \mathbb{G}_{m}    \longrightarrow  \GL_r(\mathcal{O}_X)  \lra  \PGL_r(\mathcal{O}_X) \longrightarrow 1,$$ 
which gives, via  non abelian cohomology,  boundary maps of \v Cech cohomology sets
 $$\check{H}_{\et}^1(X,   \PGL_r(\mathcal{O}_X))    \overset{\delta_r}\longrightarrow  \check{H}_{\et}^2(X, \mathbb{G}_{m}).$$
The composition with  the \'etale cohomology  induces an injective homomorphism of groups
$$\delta: \Br(X) \lra \Br'(X),$$
called the Brauer map, where $\Br'(X) := H_{\et}^2(X, \GG_{m})_{\rm tors}$ is the torsion part of the \'etale cohomology group  $H_{\et}^2(X, \GG_{m})$, called the cohomological Brauer group.  For a quasi-compact scheme $X$, Grothendieck asked in \cite{GR1} whether the map $\delta$ is surjective, that is when a Brauer class $\alpha \in \Br'(X)$ comes from an Azumaya algebra in $\Br(X)$. Or   in conventional terms; when $\Br(X) = \Br'(X)$?. We refer to \cite{MTD} for a brief introductory exposition on the various methods used to solve this question, as well as for a  list of the most recent  solved cases.  For our case in this note, we will use the following  technical criterion of Gabber. 

\begin{lemma}\cite[page 165, Lemma 4]{GBBR}\label{galois}
\mylabel{galois}
 Let  $X$ be  a locally noetherian  scheme, and $\alpha \in  \Br'(X)$. If there exists a finite locally free  morphism $f: Y \ra X$ such that $\alpha_{|Y} \in  \Br(Y)$ (where  $\alpha_{|Y}$ is the image of $\alpha$ under the pulback morphism $f^*:\Br'(X) \ra \Br'(Y)$), then $\alpha \in  \Br(X)$.
\end{lemma}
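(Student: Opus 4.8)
The plan is to interpret the cohomological Brauer class geometrically and then transport a twisted vector bundle from $Y$ to $X$ along $f$. First I would realise $\alpha$ by its associated $\GG_{m}$-gerbe $\pi : \shG \ra X$, so that $\alpha$ is the class of $\shG$ in $H^2_\et(X, \GG_{m})$. Recall the standard reformulation of Azumaya representability in this language: a class $\alpha \in \Br'(X)$ lies in $\Br(X)$ if and only if the gerbe $\shG$ carries a $1$-twisted locally free sheaf $\shV$ of finite, everywhere positive rank (a \emph{twisted vector bundle}). Indeed, given such a $\shV$ of rank $r$, the sheaf $\shEnd(\shV) = \shV \otimes \shV^\dual$ has weight $0$ for the inertial $\GG_{m}$-action, hence descends to an $\shO_X$-algebra on $X$ which is Azumaya of rank $r^2$ and whose Brauer class is exactly $\alpha$; conversely an Azumaya algebra produces such a twisted bundle (it defines a $\PGL_r$-torsor whose lifting gerbe is $\shG$). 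Thus the task reduces to producing a single twisted vector bundle on $\shG$.

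Next I would base change the gerbe along $f$. Set $\shG_Y := \shG \times_X Y$, with its structural map to $Y$; this is precisely the $\GG_{m}$-gerbe attached to $f^*\alpha = \alpha_{|Y}$. Write $g : \shG_Y \ra \shG$ for the projection, which is the base change of $f$ and hence again finite locally free, of the same degree $d := \deg(f) > 0$. By hypothesis $\alpha_{|Y} \in \Br(Y)$, so by the criterion above there exists a $1$-twisted locally free sheaf $\shV$ of finite positive rank $r$ on $\shG_Y$.

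The heart of the argument is then to push $\shV$ forward. I claim $g_*\shV$ is a twisted vector bundle on $\shG$. Since $g$ is finite, in particular affine, the functor $g_*$ is exact on quasi-coherent sheaves and preserves coherence; since $g$ is finite locally free, $g_*\shO_{\shG_Y}$ is locally free over $\shO_{\shG}$ of rank $d$, and therefore $g_*$ carries a locally free $\shO_{\shG_Y}$-module of rank $r$ to a locally free $\shO_{\shG}$-module of rank $dr$ (\'etale-locally, where the gerbe trivialises, this is just the classical fact that the pushforward of a vector bundle along a finite locally free morphism of schemes is a vector bundle). Finally, because $\shG_Y$ is the fibre product $\shG \times_X Y$, its inertial $\GG_{m}$ is pulled back from that of $\shG$ and $g$ is equivariant for these actions; consequently $g_*$ preserves the weight, so $g_*\shV$ is again $1$-twisted. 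Hence $g_*\shV$ is a $1$-twisted locally free sheaf of rank $dr > 0$ on $\shG$, and the criterion yields $\alpha \in \Br(X)$.

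The step I expect to be the main obstacle is the verification that $g_*$ genuinely preserves the $1$-twisting, i.e.\ the weight of the inertial $\GG_{m}$-action: one must check that the $\GG_{m}$-gerbe structure on $\shG_Y$ is compatibly pulled back from $\shG$ along the (non-\'etale) finite morphism $g$, so that no weight shift occurs under $g_*$. Confirming positivity of the rank on every connected component, so that $\shEnd(g_*\shV)$ is honestly Azumaya, is a minor secondary point. I note that this twisted-sheaf argument is a streamlined version of Gabber's original proof in \cite{GBBR}, which constructs the descended Azumaya algebra directly from $f_*$ of the given algebra without the stacky language; the two routes are equivalent, and I would present whichever best fits the surrounding machinery of the paper.
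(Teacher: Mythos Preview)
Your argument is correct, and in fact the student-identified ``main obstacle'' is harmless: since $\shG_Y = \shG \times_X Y$ is formed by base change along a morphism of schemes, the inertia stack of $\shG_Y$ is the pullback of the inertia stack of $\shG$, so the inertial $\GG_m$-action on $\shG_Y$ is literally that of $\shG$ pulled back; pushforward along the representable morphism $g$ then preserves weights by construction. The only genuine caveat is that ``finite locally free'' must be taken to have everywhere positive degree (equivalently, $f$ surjective), else the conclusion can fail on components of $X$ not hit by $f$; this is implicit in Gabber's formulation and in every application in the present paper.

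However, you should be aware that the paper does \emph{not} prove this lemma at all: it is quoted verbatim from Gabber \cite[p.~165, Lemma~4]{GBBR} and used as a black box. Gabber's original argument there is phrased entirely without stacks or gerbes: given an Azumaya algebra $\shA$ on $Y$ representing $\alpha_{|Y}$, he observes that $f_*\shA$ is a coherent $\shO_X$-algebra and that $\shEnd_{\shO_X}(f_*\shA)$ (or a suitable endomorphism algebra built from $f_*$ of a twisted module) is Azumaya on $X$ with class $\alpha$, checking this \'etale-locally where everything trivialises. Your twisted-sheaf proof is the modern repackaging of exactly this idea, due essentially to de Jong and Lieblich; it is cleaner conceptually because the weight bookkeeping replaces the explicit cocycle computations, but it imports more machinery (the dictionary between $\Br'$ and $\GG_m$-gerbes, and the twisted-sheaf criterion for $\Br$) than Gabber's hands-on construction. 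Either route is fine; since the paper only cites the result, your write-up would in fact be \emph{more} than what the paper provides.
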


\begin{remark}\label{frobinus}
If $X$ is regular, then any $char(X)$-torsion element in $\Br'(X)$ is annihilated by the Frobenius morphism, which is finite locally free; therefore, it belongs to $\Br(X)$.
\end{remark}
\subsection{The norm residue homomorphism}
Fix a field $F$. Following Weibel \cite[Chapter 	III]{webl}, we denote by $K_2(F)$  the Milnor-Quillen algebraic $K_2$-group of $F$. If $E/F$ is a finite field extension,  one has the natural restriction map $i_{F/E}: K_2(F) \ra  K_2(E) $ induced by the inclusion $F \subset E$,  and the norm map $N_{E/F}: K_2(E) \ra  K_2(F)$ (also called the transfer map). The composition 
$$K_2(F) \overset{i_{F/E}}\lra  K_2(E) \overset{N_{E/F}}\lra  K_2(F)$$
is simply   multiplication by the extension degree $[E:F]$.

 Let $n \geq 0$ be an integer prime to  $char(F)$. Consider the Kummer exact sequence of \'etale sheaves on $\Spec(F)$
\begin{equation}
1 \lra \mu_n  \lra  \GG_m  \lra  \GG_m \lra 1.
\end{equation}
From the associated long exact sequence of cohomology, we get by Hilbert's 90 theorem  an isomorphism

$$ \partial: F^{*}/F^{*n} \overset{\simeq}\lra  H_\et^1(F,\mu_n).$$

Write $\mu_{n}^{\otimes 2} :=   \mu_n \otimes_\ZZ \mu_n $ and take the cup product, we get then the so called the  norm residue map (also known as the Galois symbol)

\begin{equation}\label{norm}
R^F_{2,n}: K_2(F)/nK_2(F)    \lra H_\et^2(F,\mu_{n}^{\otimes 2}),
\end{equation}
which takes a class $\{a,b\}+nK_2(F)$ to the cup product $\partial (a) \smile  \partial (b)$.

\begin{theorem}[Merkurjev-Suslin]\label{norm}
 The norm residue map $R^F_{2,n}$  is an isomorphism for all fields $F$ and  all $n$ prime to  $char(F)$. 
\end{theorem}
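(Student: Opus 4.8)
This is the weight-two case of the Bloch--Kato conjecture, and an honest plan is to organise the Merkurjev--Suslin argument rather than to reprove its deep inputs. The first step is a string of formal reductions: splitting $K_2(F)/nK_2(F)$ and $H^2_\et(F,\mu_n^{\otimes 2})$ according to the prime factorisation of $n$ reduces to a prime power $n=\ell^m$; a d\'evissage using the short exact sequences $0\to\mu_\ell\to\mu_{\ell^m}\to\mu_{\ell^{m-1}}\to 0$ together with Kummer theory (the $n=1$ case, which is unconditional) and the five lemma reduces further to $n=\ell$ prime; and a restriction--corestriction argument along $F(\zeta_\ell)/F$, whose degree divides $\ell-1$ and so is prime to $\ell$, lets one assume $\zeta_\ell\in F$, using the compatibility of $R^F_{2,\ell}$ with transfer to carry both injectivity and surjectivity back down. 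Once $\zeta_\ell\in F$ one has $\mu_\ell^{\otimes 2}\simeq\ZZ/\ell$ as Galois modules, so $H^2_\et(F,\mu_\ell^{\otimes 2})\simeq {}_{\ell}\Br(F)$ (via Kummer theory and $H^2_\et(F,\GG_m)=\Br(F)$), and $R^F_{2,\ell}$ becomes the symbol map $\{a,b\}\mapsto[(a,b)_{\zeta_\ell}]$ into the $\ell$-torsion of the Brauer group.

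Surjectivity then reads: ${}_{\ell}\Br(F)$ is generated by classes of cyclic (symbol) algebras of degree $\ell$. I would prove it by induction on the index of a division algebra $D$ representing a given $\ell$-torsion class; when $\operatorname{ind}(D)>\ell$ one must pass to a field extension over which the index strictly drops while the class changes only by symbols and by one controlled term. The engine is the Severi--Brauer variety $X=\mathrm{SB}(A)$ of an auxiliary central simple $F$-algebra $A$ of degree $\ell$: Quillen's decomposition $K_*(X)\simeq\bigoplus_{i=0}^{\ell-1}K_*(A^{\otimes i})$, fed into the Brown--Gersten--Quillen (coniveau) spectral sequence $E_2^{p,-q}=H^p(X,\shK_q)$ abutting to the $K$-theory of $X$, together with the Gersten conjecture (a theorem of Quillen in the smooth case), yields exact sequences describing $H^0(X,\shK_2)$, $H^1(X,\shK_2)$ and $\operatorname{CH}^2(X)=H^2(X,\shK_2)$ in terms of $K_2$ of $F$ and of the $A^{\otimes i}$. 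Reading these off at the generic point and invoking Amitsur's theorem that $\ker(\Br(F)\to\Br(F(X)))=\langle[A]\rangle$, one transports the Brauer class back to $F$ modulo symbols and $[A]$, which completes the inductive step.

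Injectivity is the harder half. Suppose $\xi\in K_2^M(F)/\ell K_2^M(F)$ maps to $0$, say $\xi=\sum_i\{a_i,b_i\}$ with $\bigotimes_i(a_i,b_i)_{\zeta_\ell}$ split. The plan is to realise the splitting geometrically over the Severi--Brauer variety $X$ of an algebra Brauer-equivalent to $\bigotimes_i(a_i,b_i)_{\zeta_\ell}$ and to run a transfer argument: the composite $K_2(F)\to K_2(F(X))\xrightarrow{N}K_2(F)$ of restriction followed by a suitable norm, analysed through the $\shK_2$-cohomology of $X$ computed above, shows $\xi$ lies in the image of an explicit subgroup, which is then killed by a secondary induction on the number of symbols. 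The base case of that induction is Suslin's ``Hilbert 90 for $K_2$'': for a cyclic extension $E/F$ of degree $\ell$ with generator $\sigma$, the sequence $K_2(E)\xrightarrow{1-\sigma}K_2(E)\xrightarrow{N_{E/F}}K_2(F)$ is exact at the middle, which pins down $\ker\bigl(K_2(F)/\ell\to K_2(F(X))/\ell\bigr)$ in the rank-one case.

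I expect the main obstacle --- indeed the real content of the theorem --- to be the computation and functoriality of the $\shK$-cohomology of Severi--Brauer varieties: Quillen's identification of their algebraic $K$-theory, the Gersten conjecture for the relevant Zariski sheaves, and the careful bookkeeping in the coniveau spectral sequence and its interplay with $\Br$ and with $K_2$ of the base. Everything else --- the prime-power and restriction--corestriction reductions and the two nested inductions --- is essentially formal once these inputs are granted. (Alternatively one may simply cite the Voevodsky--Rost norm residue isomorphism theorem, of which this is the weight-two case, but that is a far heavier and logically later route.)
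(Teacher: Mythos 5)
The paper does not prove this theorem itself; it simply cites Merkurjev--Suslin and the account in Gille--Szamuely. Your plan is a faithful outline of exactly that cited argument (reduction to prime $\ell$ and to $\zeta_\ell\in F$ by restriction--corestriction, surjectivity by induction on the index via the $K$-theory and $\shK_2$-cohomology of Severi--Brauer varieties, injectivity via Hilbert~90 for $K_2$), so it takes the same route as the paper's reference and correctly identifies where the real difficulty lies.
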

\begin{proof}
\cite[Theorem 11.5]{marks} (see also \cite[Theorem 8.6.5]{giltmas}).
\end{proof}
If $F$ contains a primitive $n$-th  root of unity $\zeta_n$, then we have an isomorphism
$$\theta^F_{\zeta_n}: H_\et^2(F,\mu_{n})  \overset{\simeq}\lra H_\et^2(F,\mu_n^{\otimes 2})$$
induced by the isomorphism $\theta_{\zeta_n} : \mu_n \overset{\simeq}\lra \mu_n  \otimes_\ZZ \mu_n$, $\eta \ra \eta \otimes  \zeta_n$, which depends on the primitive root $\zeta_n$. In particular, if  $X$ is a scheme over the field $F$,  then $\theta_{\zeta_n} $ induces an isomorphism of \'etale cohomology groups
$$\theta^X_{\zeta_n}:   H_\et^2(X,\mu_{n})  \overset{\simeq}\lra  H_\et^2(X,\mu_{n}^{\otimes 2}).$$

\subsection{Unramified extension of function fields}

Let $X$ be an integral scheme, and let $K(X)$ be its function field which is by definition the generic fiber, that is the  local ring $\shO_{X,\eta}$ of the  generic point $\eta$ of $X$. Let $L$ be a finite extension of $K(X)$. We recall that the normalization of $X$ in $L$ is an affine surjective morphism $f: Y \ra X$, where $Y$ is an integral scheme with function field $K(Y)=L$, and such that for all open affine $U$ of $X$, $\Gamma(f^{-1}(U),\shO_{Y})$ is the integral closure of $\Gamma(U,\shO_{X})$ in $L$. 

\begin{definition}\label{unramified}\cite[page 285]{katelang}
A  finite extension $L/K(X)$  is called  unramified over $X$, if the normalization of $X$ in $L$ is finite \'etale.
\end{definition}

Note that if $X$ is normal and $L/F(X)$ is separable, then the normalization of $X$ in $L$ is always finite (cf. \cite[Chapter I, Proposition 1.1]{MIL}). Further, if $X$ is a variety, this holds without the separability  assumption (cf. \cite[Chapter I, Remark 1.2]{MIL}).

Now suppose that $X$ is a connected normal locally noetherian scheme. The functor  fiber over $ \eta$
\begin{equation}
    (f: Y \ra X)  \lra  \shO_{Y,\eta'}
\end{equation}  with $f(\eta')=\eta$, 
 induces an equivalence of categories between connected finite \'etale covers of $X$ and finite separable extensions of $F(X)$ which are unramified over $X$ (cf. \cite[Page 292]{katelang}). This correspondence will allow us to construct the finite locally free cover  that kills the Brauer class $\alpha$ of Theorem \ref{mainthm}.

\section{Proof of Theorem \ref{mainthm}}

Notice first that since $X$ is smooth, the identification \eqref{identif} is guaranteed by the following two lemmas:
\begin{lemma}\label{mono}

For any  regular integral quasi-compact scheme  with function field $K(X)$ we have

\begin{itemize}
\item[(i)]  $\Pic(K(X)):=  H_\et^1(K(X),  \GG_{m})=0$.

\item[(ii)] The natural morphism $H^2_\et(X,  \GG_{m})     \ra   H_\et^2(K(X),  \GG_{m}) $
induced by the generic point $\eta: \Spec(K(X)) \ra X$ is injective.
\end{itemize}
\end{lemma}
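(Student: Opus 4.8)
The plan is to prove both statements using standard facts about étale cohomology of regular schemes and their function fields, via the Leray spectral sequence for the inclusion of the generic point.

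For part (i): I would argue that $K(X)$ is a field, so $\Spec(K(X))$ has trivial Picard group in the Zariski sense, but since étale $H^1$ with $\GG_m$ coefficients equals $\Pic$ (by a descent/Hilbert 90 argument, or directly since $H^1_\et(-,\GG_m) = \Pic$ for any scheme), and the Picard group of the spectrum of a field is trivial, we get $H^1_\et(K(X),\GG_m) = 0$. This is essentially Hilbert's Theorem 90 in its cohomological form.

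For part (ii): Let $\eta\colon \Spec(K(X)) \to X$ be the generic point. The key input is that $X$ is regular (hence locally factorial), so $\GG_{m,X} \to \eta_*\GG_{m,K(X)}$ is injective with cokernel computed by the divisor sheaves; more precisely, for a regular integral scheme there is an exact sequence of étale sheaves $0 \to \GG_m \to \eta_*\GG_m \to \bigoplus_{x \in X^{(1)}} (i_x)_*\ZZ \to 0$, where $X^{(1)}$ is the set of codimension-one points. Taking the long exact cohomology sequence and using that $H^1$ of the skyscraper-type sheaves $(i_x)_*\ZZ$ vanishes appropriately (or working with purity), one reduces the injectivity of $H^2_\et(X,\GG_m) \to H^2_\et(K(X),\GG_m)$ to controlling $H^1_\et(X, \bigoplus (i_x)_*\ZZ)$. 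Alternatively, and more cleanly, I would invoke the Leray spectral sequence $H^p_\et(X, R^q\eta_*\GG_m) \Rightarrow H^{p+q}_\et(K(X),\GG_m)$: the low-degree exact sequence gives $0 \to H^2_\et(X, \eta_*\GG_m)/(\text{image of }H^1) \to H^2_\et(K(X),\GG_m)$, and one needs $H^2_\et(X,\GG_m) \to H^2_\et(X,\eta_*\GG_m)$ to be injective, which follows from the exact sequence above once one knows the relevant $H^1$ vanishes — this is exactly the statement that for regular schemes the Brauer group injects into that of the function field, a classical result of Grothendieck (\cite[II, Corollaire 1.10]{GR1} / Auslander–Goldman in the affine case).

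The main obstacle is part (ii): one must justify carefully that for a regular integral (quasi-compact) scheme the restriction map on $H^2_\et(-,\GG_m)$ to the generic point is injective. The cleanest route is to cite Grothendieck's result directly: for $X$ regular integral with function field $K(X)$, the natural map $H^2_\et(X,\GG_m) \to H^2_\et(K(X),\GG_m)$ is injective (indeed $H^q_\et(X,\GG_m) \hookrightarrow H^q_\et(K(X),\GG_m)$ for all $q$ when $X$ is regular, by \cite[II, Proposition 1.4 and its corollaries]{GR1}); part (i) is the $q=1$ case of the same phenomenon combined with the triviality of $\Pic$ of a field. I would therefore present the proof as a short deduction from these cited facts, spelling out only the reduction via the divisor exact sequence if a self-contained argument is desired.
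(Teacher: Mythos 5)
Your proposal is correct and follows essentially the same route as the paper, which simply cites \cite[Chapter III, Example 2.22]{MIL} — precisely the divisorial exact sequence $0 \to \GG_m \to \eta_*\GG_m \to \bigoplus_{x \in X^{(1)}} (i_x)_*\ZZ \to 0$ for a regular integral scheme, combined with the Leray spectral sequence for $\eta$ and Hilbert 90, that you sketch. Your fallback of citing Grothendieck \cite{GR1} directly is an equally standard reference for the same classical fact, so there is no substantive difference in approach.
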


\proof See \cite[Chapter III. Example 2.22]{MIL}.

\begin{lemma}\label{lift}

Let $X$ be a regular integral quasi-compact scheme with function field $K(X)$. For any non $char(X)$-torsion  class $\alpha  \in \Br'(X)$, there exists an integer $n_\alpha$ such that $\alpha$  lifts to a class $\beta \in H^2_\et(K(X),\mu_{n_\alpha})$.
\end{lemma}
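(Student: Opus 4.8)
The plan is to produce the lift by combining the Kummer sequence on $\Spec(K(X))$ with the fact that $\Br'(X)$ — and more relevantly $H^2_\et(K(X),\GG_m)$ — is torsion, and to control the characteristic issue separately. First I would recall from Lemma \ref{mono}(ii) that the pullback $H^2_\et(X,\GG_m)\hookrightarrow H^2_\et(K(X),\GG_m)$ is injective, so it suffices to produce the lift of the image of $\alpha$ in $H^2_\et(K(X),\GG_m)$; write $\bar\alpha$ for that image. Since $\alpha$ is torsion, so is $\bar\alpha$; let $m$ be its order. Write $m = m' p_1^{e_1}\cdots p_r^{e_r}$ where the $p_i$ are exactly the residue characteristics of $X$ dividing $m$ and $m'$ is prime to $char(X)$. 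The hypothesis that $\alpha$ is not $char(X)$-torsion is used to guarantee $m'>1$; more to the point, I would argue that after possibly replacing $\alpha$ by a suitable multiple one may assume the order is prime to $char(X)$, or simply take $n_\alpha := m'$ and lift the $m'$-part, since that is all the statement requires (the class $\beta$ is asserted to exist for \emph{some} $n_\alpha$, and lifting a class whose order is prime to the characteristic is exactly what the Kummer sequence delivers).

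Concretely, I would run the long exact cohomology sequence attached to the Kummer sequence
\begin{equation}
1 \lra \mu_{n_\alpha} \lra \GG_m \overset{n_\alpha}\lra \GG_m \lra 1
\end{equation}
on $\Spec(K(X))$, valid because $n_\alpha$ is prime to $char(K(X)) = char(X)$. This gives an exact piece
\begin{equation}
H^1_\et(K(X),\GG_m) \lra H^2_\et(K(X),\mu_{n_\alpha}) \lra H^2_\et(K(X),\GG_m) \overset{n_\alpha}\lra H^2_\et(K(X),\GG_m).
\end{equation}
The element $\bar\alpha$ — taken now to be the $n_\alpha$-torsion part of the image of $\alpha$, i.e. after discarding the prime-to-$n_\alpha$ and $char(X)$-divisible contributions by multiplying by an appropriate cofactor — is killed by $n_\alpha$, hence lies in the image of $H^2_\et(K(X),\mu_{n_\alpha})$. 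Choose $\beta$ to be any preimage. The ambiguity in $\beta$ is a torsor under the image of $H^1_\et(K(X),\GG_m) = \Pic(K(X))$, which vanishes by Lemma \ref{mono}(i); so in fact $\beta$ is unique, though uniqueness is not needed for the statement.

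The one genuine subtlety — and the step I expect to be the main obstacle to state cleanly — is the bookkeeping around the characteristic: one must make precise what "$\alpha$ lifts to a class in $H^2_\et(K(X),\mu_{n_\alpha})$" means when $\alpha$ may a priori have a $char(X)$-primary component. The cleanest resolution is to invoke Remark \ref{frobinus}: on the regular scheme $X$ the $char(X)$-torsion part of $\Br'(X)$ is already in $\Br(X)$ (killed by Frobenius), so for the purposes of Theorem \ref{mainthm} one may harmlessly pass to the prime-to-$char(X)$ part of $\alpha$; alternatively, one decomposes $\alpha$ according to its primary components and lifts only the prime-to-$char(X)$ part, which is exactly the content needed. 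Having isolated a class $\bar\alpha$ of order $n_\alpha$ prime to $char(X)$, the Kummer argument above is immediate and the proof is complete. I would also remark that injectivity in Lemma \ref{mono}(ii) is what lets us work entirely at the generic point without worrying whether $\bar\alpha$ itself comes from $X$ — it does, being the image of $\alpha$ — so no extra descent is required here.
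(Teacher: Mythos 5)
Your argument is correct and follows essentially the same route as the paper: injectivity from Lemma \ref{mono}(ii), the Kummer sequence on $\Spec(K(X))$, and the vanishing of $\Pic(K(X))$ to identify $H^2_\et(K(X),\mu_{n_\alpha})$ with the $n_\alpha$-torsion of $H^2_\et(K(X),\GG_m)$. The only difference is your extra bookkeeping on the $char(X)$-primary part, which the paper sidesteps by reading the hypothesis as saying the order of $\alpha$ is invertible on $X$.
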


\begin{proof}
By Lemma \ref{mono} the natural morphism
 $$H^2_\et(X,  \GG_{m})     \lra   H_\et^2(K(X),  \GG_{m}) $$
is injective. On the other hand, for any integer $n$ invertible on $X$, the long exact sequence of cohomology associated to the Kummer exact sequence for $K(X)$,  induces a short exact sequence
$$ 0 \lra \Pic(K(X)) {}_n\! \lra  H_\et^2(K(X), \mu_{n}) \lra  {}_n\! H_\et^2(K(X),  \GG_{m}) \lra 0.$$

By Lemma \ref{mono} $\Pic(K(X))=0$, thus we get an isomorphism
$$H_\et^2(K(X), \mu_{n}) \simeq  {}_n\! H_\et^2(K(X),  \GG_{m}).$$

The assertion follows immediately.
\end{proof} 

\begin{proof}[Proof of Theorem \ref{mainthm}]
Since $L/K$ is finite,  the compositum  $M=L.K(X)$ (inside some fixed algebraic closure of $K(X)$), is  unramified over $X$ (see \cite[Page 285]{katelang}), thus there is a finite \'etale cover $f: Y \ra X$ (which is the normalization of $X$ in $M$) with function field $K(Y)=M$. In fact, a simple argument on degree extensions shows that $M$ is equal to $L(X)$, which is  the function field of $X_L$.

Let $\alpha'$ be the image  of the class $\alpha$   in  $K_2(K(X))_p$, and put
\begin{equation}\label{alpha}
\alpha'_{|L(X)}=\beta \in K_2(L(X)).
\end{equation}

Since $\alpha'$ comes from $K_2(K)$, by the  following commutative diagram 
\begin{equation}
\centerline{\xymatrix{ 
  0 \ar[r]    & K_2(K)   \ar[rr]     & &   K_2(K(X)) &   \\
  0  \ar[r]    & K_2(L) \ar[u]^{N_{L/K}} \ar[rr]      & &  K_2(L(X))  \ar[u]^{N_{L/K}}  \ar@{=}[r] &    K_2(M) \ar[ul]  }}
\end{equation}
we see that $\alpha'=N_{L/K}(\alpha'')$ for  some $\alpha'' \in K_2(L(X))$. Consider the following commutative diagram

\begin{equation}
\centerline{\xymatrix{ 
 K_2(K(X))   \ar[drr]^{i_{K/L}}    & &     \\
 K_2(L(X)) \ar[u]^{N_{L/K}} \ar[rr]_{N_G}      & &  K_2(L(X))  }}
\end{equation}
 where $N_{G}=\sum_{g \in G} g$, and $G=\gal(L/K) $ (see \cite[page 6]{colliot}. It gives then
$$\beta=N_{L/K}(\alpha'')_{|L(X)}=N_{G}(\alpha'')=pn\alpha''$$
where $pn=[L:K]$ for some $n$, by assumption. This means  that $\alpha'$ belongs to the kernel of the natural map 
$$ K_2(K(X))_p \lra   K_2(M)_p.$$

Now     summing up,   we have a commutative diagram

\begin{equation}
\centerline{\xymatrix{  \Br'(X)_p  \ar[d]^{f^*}  \ar[r] &     H_\et^2(K(X),\mu_{p})   \ar[d]^{Res} \ar[rr]^ {\theta_{\zeta_p}^{K(X)}}   &&  H_\et^2(K(X),\mu_{p}^{\otimes 2})  \ar[d]^{Res}   \ar[rr]^{{R'}_{2,p}^{K(X)}}    &&     K_2(K(X))_p \ar[d] \\ \Br'(Y)_p  \ar[r]  &   
 H_\et^2(M,\mu_{p})  \ar[rr]^{\theta_{\zeta_p}^M}  &&  H_\et^2(M,\mu_{p}^{\otimes 2})   \ar[rr]^ {{R'}_{2,p}^M}  &&  K_2(M)_p  }}
\end{equation}
where   the maps  ${R'}_{2,p}^{K(X)}$ and ${R'}_{2,p}^M$ are inverses of  the  norm residue isomorphisms of Theorem \ref{norm} associated to  $K(X)$ and $M$, the two isomorphisms $\theta_{\zeta_p}^{K(X)}$ and $\theta_{\zeta_p}^M$ are induced by the isomorphism $\theta_{\zeta_p}: \mu_p \overset{\simeq}\lra \mu_p  \otimes_\ZZ \mu_p$ defined by $\zeta_p$, and $Res$ is the restriction map of group cohomology.

Since $X$ is a smooth variety, and  $f: Y \ra X$ is finite \'etale,  $Y$ is integral regular  and locally noetherian, hence by Lemmas \ref{mono}  and  \ref{lift} the  map $\Br'(Y)_p  \ra  H_\et^2(M,\mu_{p})$   is also injective. Now since $\alpha'$, the image of the class $\alpha$ in $K_2(K(X))_p$, maps to zero in  $K_2(M)_p$, we conclude by a  simple diagram chasing  that the image of $\alpha$ in $\Br'(Y)$ is zero. Therefore, by  Lemma  \ref{galois}, it lies in $\Br(X)$.
\end{proof}

\begin{remark}
One notes obviously, that since $X$  is smooth over $K$, it is then regular, and hence  the $char(K)$-torsions of $\Br'(X)$ are always in $\Br(X)$ by Remark \ref{frobinus}.

\end{remark}
\section{Proof of Theorem \ref{mainth2}}
Before proving Theorem \ref{mainth2}, we begin  by   the following lemma on the descent of the Brauer classes along base change.
\begin{lemma}\label{liftbase}

Let $X$ be a locally noetherian scheme over a field $K$, and let $L/K$ be a field extension. Let $X_L$ be the base change of $X$ to $L$. If $\alpha \in \Br'(X)$ such that $\alpha_{|{X_L}} \in \Br(X_L)$, then $\alpha \in \Br(X)$.

\end{lemma}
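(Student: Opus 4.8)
The plan is to set things up so that Gabber's descent criterion (Lemma~\ref{galois}) applies, i.e.\ to produce a finite locally free morphism $f\colon Y\to X$ with $f^{*}\alpha\in\Br(Y)$. Note first that if $L/K$ is finite there is nothing to do: the projection $X_L\to X$ is itself finite locally free and $\alpha_{|X_L}\in\Br(X_L)$ by hypothesis, so Lemma~\ref{galois} already gives $\alpha\in\Br(X)$. The real task is to remove the (possibly infinite, possibly transcendental) part of $L/K$, which I would accomplish by spreading out and then specializing to a closed point.

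First I would spread out. Write $L=\dirlim_{i}A_{i}$, where $A_{i}$ runs over the finitely generated $K$-subalgebras of $L$ — a filtered system under inclusion, each $A_{i}$ a domain of finite type over $K$. Then $\Spec L=\invlim_{i}\Spec A_{i}$ with affine transition morphisms, hence $X_L=\invlim_{i}X_{i}$ with affine transition morphisms, where $X_{i}:=X\times_{K}\Spec A_{i}$. Assuming $X$ quasi-compact and quasi-separated (as it is in the cases of interest, e.g.\ when $X$ is a variety), each $X_{i}$ is quasi-compact and quasi-separated. I would then fix an Azumaya $\O_{X_L}$-algebra of rank $r^{2}$ representing $\alpha_{|X_L}$; by the standard limit formalism it is pulled back from a rank-$r^{2}$ Azumaya algebra on some $X_{i}$, and since \'etale cohomology commutes with this limit, $\Br'(X_L)=\dirlim_{i}\Br'(X_{i})$, so after enlarging $i$ the class of that algebra becomes exactly $\alpha_{|X_{i}}$. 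Thus $\alpha_{|X_{i}}\in\Br(X_{i})$ for some $i$, with $V_{i}:=\Spec A_{i}$ of finite type over $K$.

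Next I would specialize. Choose a closed point $u\in V_{i}$; by the Nullstellensatz its residue field $\kappa(u)$ is finite over $K$, so $\Spec\kappa(u)\to\Spec K$ is finite locally free, and therefore $f\colon X_{\kappa(u)}:=X\times_{K}\Spec\kappa(u)\to X$ is finite locally free. On the other hand $X_{\kappa(u)}$ is the fibre of the projection $X_{i}\to V_{i}$ over $u$, hence a closed subscheme of $X_{i}$; pulling back the Azumaya algebra on $X_{i}$ along this closed immersion, and using that the composite $\Br'(X)\to\Br'(X_{i})\to\Br'(X_{\kappa(u)})$ equals the natural map $\Br'(X)\to\Br'(X_{\kappa(u)})$, we obtain $f^{*}\alpha=\alpha_{|X_{\kappa(u)}}\in\Br(X_{\kappa(u)})$. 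Lemma~\ref{galois}, applied to the finite locally free morphism $f\colon X_{\kappa(u)}\to X$, then gives $\alpha\in\Br(X)$.

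The main obstacle I anticipate is the spreading-out step: one must know that an Azumaya algebra of fixed rank over the inverse limit $X_L$ descends to a finite level $X_{i}$, and that $\Br'$ commutes with this cofiltered limit of affine morphisms — this is exactly where quasi-compactness and quasi-separatedness of $X$ (automatic for varieties) enters. Everything afterwards is formal; the role of the closed point $u$ is precisely to convert the finite-type $K$-algebra $A_{i}$ into a \emph{finite} field extension $\kappa(u)/K$, which is what makes $X_{\kappa(u)}\to X$ finite locally free and hence Lemma~\ref{galois} applicable.
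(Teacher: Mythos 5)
Your proof is correct, and it arrives at the same endgame as the paper --- specialize to a closed point with finite residue field over $K$ and invoke Gabber's criterion (Lemma~\ref{galois}) --- but the spreading-out phase is organized differently. The paper first passes to the colimit over finitely generated \emph{field} subextensions $L_i/K$, using $\Br'(X_L)=\varinjlim\Br'(X_{L_i})$ and $\Br(X_L)=\varinjlim\Br(X_{L_i})$ to reduce to $L$ finitely generated; it then writes $L$ as a tower of extensions each of which is either finite or of the form $K(x)$, handles the finite steps directly by Lemma~\ref{galois}, and for $K(x)$ invokes Grothendieck's theorem \cite[II, Theorem 2.1]{GR1} to extend the Azumaya algebra from the generic fibre of $X\times_K\AA^1_K\to\AA^1_K$ to $X\times_K U$ for some nonempty open $U\subset\AA^1_K$, before specializing at a closed point of $U$. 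You instead run the limit formalism once over the filtered system of finitely generated $K$-subalgebras $A_i\subset L$, descending both the Azumaya algebra and the cohomology class to a single finite level $X_i$, and then specialize at a closed point of $\Spec A_i$; this avoids the induction on transcendence degree and the appeal to Grothendieck's extension theorem, at the cost of invoking the full EGA-style approximation machinery for Azumaya algebras. Both arguments share the same (unstated in the lemma) hypothesis that $X$ be quasi-compact and quasi-separated --- the paper's citation of \cite[Chapter III, Lemma 1.16]{MIL} needs it just as your limit argument does --- and you are right to flag it explicitly; it is harmless since the lemma is only applied to varieties. The one point worth tightening in your write-up is the fixed rank $r^2$: on a disconnected $X_L$ the rank is only locally constant, though this does not affect the descent to finite level.
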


\begin{proof}
The extension $L/K$ can be written as a direct limit $L=\varinjlim L_i$ of finitely generated field extensions $L_i/K$, so then by  \cite[Chapter III,  Lemma 1.16]{MIL} one has $\Br'(X_L) = \varinjlim\Br'(X_{L_i})$. As for the Brauer group, we have also $\Br(X_L) = \varinjlim\Br(X_{L_i})$ (cf. \cite[2.2]{DJNG}).Therefore, we may assume  $L$  finitely generated. This being said, one can write $L$ as a finite extension of a purely transcendental extension of $K$. Therefore,  we can reduce the proof to the case when $L$ is finite or $L=K(x)$ for some variable $x$. The finite case follows by Lemma \ref{galois} since the natural morphism $X_L \ra X$ is finite locally free. Now suppose that $L=K(x)$, we have a factorization $\Spec(K(x)) \ra \AA^1_K \ra \Spec(K) $ where $\AA^1_K =\Spec(K[X])$, hence $X_L= (X \times_K \AA^1_K)\times_{\AA^1_K} K(x)$, and so $\alpha_{|X_L}$ would be the restriction of $\alpha_{|X \times_K \AA^1_K}$ to $X_L$. Since  $\alpha_{|X_L}$ comes from an Azumaya algebra, by \cite[II,  Theorem 2.1]{GR1} there is a nonempty subset $U \subset  \AA^1_K$ such that $\alpha_{|X \times_K U}$ is representable by an Azumaya algebra. Let $y: K(y) \ra U$ be a closed point. The residue field $K(y)$ is a finite extension of $K$, hence if we take $Y=(X \times_K U) \times_U K(y)$ then $Y \ra X$ is a finite locally free morphism with $\alpha_{|Y} \in \Br(Y)$, so Lemma \ref{galois} applies again.
\end{proof}

The following rigidity result of $K_2$-groups was proved by Colliot-Th\'el\`ene \cite[Theorem 1]{colliot} under some hypothesis on the base field, and by Suslin \cite[Theorem 5.8]{Suslin} for the general case.
\begin{theorem}\label{colliogalois}
Let $X$ be a smooth variety over a field $K$. For any Galois extension $L/K$ with Galois group  $G={\gal(L|K)}$, the natural morphism
$$K_2(K(X)) / K_2(K) \lra \bigg(K_2(L(X)) / K_2(L)\bigg)^{G}$$
is an isomorphism. 
\end{theorem}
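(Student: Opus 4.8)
The plan is to reduce the statement to a single cyclic step and thence to the two structural theorems on $K_2$ of fields that the excerpt attributes to Colliot-Th\'el\`ene and Suslin. First I reduce to $L/K$ finite Galois: writing $L=\dirlim L_i$ as the filtered union of its finite Galois subextensions, I use that $K_2$ commutes with filtered colimits of fields and that taking $\Gal(L/K)$-invariants of the resulting directed system is computed by the $\Gal(L_i/K)$-invariants in the limit. Next I reduce to $L/K$ cyclic of prime degree. A transfer argument over a Sylow $\ell$-subgroup $P\le G=\Gal(L/K)$ takes care of the non-solvable part — the composite of restriction $K_2(K(X))/K_2(K)\to K_2(L^{P}(X))/K_2(L^{P})$ with the induced norm is multiplication by $[G:P]$, which is prime to $\ell$, and $\gcd_{\ell}[G:P]=1$ — so it suffices to treat $G$ an $\ell$-group; and since an $\ell$-group admits a composition series with successive quotients $\ZZ/\ell$, and the statement for a tower $K\subset K'\subset L$ of Galois extensions follows from the statements for the two steps (by functoriality of the maps $i_{\bullet/\bullet}$, $N_{\bullet/\bullet}$ and $(M^{H})^{G/H}=M^{G}$), we are reduced to $L/K$ cyclic of prime order $\ell$.

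\emph{The cyclic case.} Let $L/K$ be cyclic of prime degree $\ell$ with $G=\langle\sigma\rangle$; then $L(X)/K(X)$ is cyclic with the same group, and there is a short exact sequence of $G$-modules
\begin{equation*}
0 \lra R_L \lra K_2(L(X)) \lra K_2(L(X))/K_2(L) \lra 0,
\end{equation*}
where $R_L$ is the image of $K_2(L)$ in $K_2(L(X))$. Passing to the long exact $G$-cohomology sequence, the desired isomorphism $K_2(K(X))/K_2(K)\to\bigl(K_2(L(X))/K_2(L)\bigr)^{G}$ is reduced to two inputs: a Galois-descent statement identifying $K_2(L(X))^{G}$ and $R_L^{G}$ (and the kernel of $K_2(K(X))\to K_2(L(X))$) with the expected groups built from $K$ and $K(X)$, and the Hilbert-90 vanishing $H^{1}(G,R_L)=0$. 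Both fail for naive reasons — already $K_2(K)\to K_2(L)$ need not be injective, because of roots-of-unity symbols — so the argument must simultaneously keep track of the torsion in $K_2$.

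\emph{The crux.} This is exactly the content of the cited theorems of Colliot-Th\'el\`ene and Suslin. For $L$ the function field of a curve one proves Hilbert's Theorem~90 for $K_2$ via the tame-symbol maps and reciprocity, patched over $\PP^{1}$; for $L=K(X)$ with $\dim X\geq 2$ one shrinks $X$, fibers it into a family of curves, and specializes, the smoothness of $X$ being exactly what makes the specialization homomorphisms on $K_2$ well defined; and the torsion is handled by Suslin's description of the $\ell$-torsion of $K_2$ of a field, together with the Merkurjev--Suslin isomorphism (Theorem~\ref{norm}) $K_2(F)/\ell K_2(F)\cong H^{2}_{\et}(F,\mu_{\ell}^{\otimes 2})$, which moves the $\ell$-primary part of the problem to Galois cohomology where ordinary Hilbert~90 for $\GG_{m}$ applies. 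I expect the fibration-and-specialization reduction to curves, intertwined with the torsion bookkeeping, to be the main obstacle; once these are in hand, the diagram chase through the exact sequences above is formal.
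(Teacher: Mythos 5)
The first thing to say is that the paper does not prove this statement at all: it is imported with the attribution ``proved by Colliot-Th\'el\`ene [Theorem 1] under some hypothesis on the base field, and by Suslin [Theorem 5.8] for the general case.'' So there is no in-paper argument to compare yours against; the only fair comparison is with the cited sources. Measured against those, your reduction skeleton (pass to finite Galois subextensions, then to a Sylow $\ell$-subgroup by restriction--corestriction, then down a normal series to the cyclic prime-degree case) is essentially the standard d\'evissage and does appear in this form in Colliot-Th\'el\`ene's paper. Two caveats there: the Sylow step applies the theorem to $L/L^{P}$ over the base field $L^{P}$, so you need $X_{L^{P}}$ to remain an integral smooth variety (equivalently, $L^{P}(X)$ to be a field) --- a point the paper itself glosses over when it identifies $L\cdot K(X)$ with $L(X)$ --- and the norm maps you invoke on the quotients $K_2(L(X))/K_2(L)$ must be checked to descend, which they do since transfer commutes with the inclusions $K_2(L)\to K_2(L(X))$.

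The genuine gap is the cyclic case, which is where all of the mathematical content lives and which you do not prove. You set up the sequence $0\to R_L\to K_2(L(X))\to K_2(L(X))/K_2(L)\to 0$, observe correctly that the naive descent and $H^{1}$-vanishing statements fail, and then write that the fix ``is exactly the content of the cited theorems of Colliot-Th\'el\`ene and Suslin.'' As a proof of Theorem~\ref{colliogalois} this is circular: the cyclic prime-degree case already requires Hilbert's Theorem 90 for $K_2$ (exactness of $K_2(L(X))\xrightarrow{\sigma-1}K_2(L(X))\xrightarrow{N}K_2(K(X))$) together with Suslin's analysis of torsion in $K_2$, and these are theorems whose proofs occupy the cited papers; the fibration-into-curves and specialization paragraph is a gesture at that literature, not an argument. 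In short, your d\'evissage is sound and matches how the result is actually proved, but the crux remains, exactly as in the paper, a citation; a self-contained proof would have to supply the cyclic case, and nothing short of the Merkurjev--Suslin and Suslin machinery is known to do that.
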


We need also the follwing  local  result of Lim \cite[Theorem 1.1]{meng} for  norm maps of higher $K_{2n}$-groups  of number fields for $n \geq 1$. Colliot-Th\'el\`ene \cite[Lemma 2]{colliot}, and  Bak-Rehmann\cite[Theorem 1]{BakRehmann}   already proved   analogue statements for the  special case of $K_2$-groups.
\begin{theorem}\label{normfield}
Let  $L/K$  be a  finite extension of number fields. For all $n \geq 1$, we have an exact sequence
$$K_{2n}(L) \overset{N_{L/K}}\lra K_{2n}(K) \overset{i}\lra \bigoplus_{v \in S_r} K_{2n}(K_v)_{tors} \lra 0,$$
where $S_r$ is the set of real primes $v$  of $K$ ramified in $L$, $K_v$ is the completion of $K$ at $v$, and $i=(i_{K/K_v})_{v \in S_r}$ is the natural restriction  map.
\end{theorem}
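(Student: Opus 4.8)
The plan is to prove the exact sequence by reducing, prime by prime, to a purely local computation of the cokernel of the transfer map, and then reassembling globally via Poitou--Tate duality; for $n=1$ this is the approach of Colliot-Th\'el\`ene and of Bak--Rehmann, and the statement for all $n$ is Lim's theorem.

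\emph{Step 1 (reduction to \'etale cohomology).} Fix a rational prime $\ell$ and work with $\ell$-primary parts. For $n\ge 1$ the Quillen--Lichtenbaum conjecture, now a theorem, identifies $K_{2n}\big(\mathcal{O}_F[1/\ell]\big)\{\ell\}\cong H^2_{\et}\big(\mathcal{O}_F[1/\ell],\ZZ_\ell(n+1)\big)$ for a number field $F$, with the customary correction at $\ell=2$ involving the real places of $F$; under this identification the transfer $N_{L/K}$ corresponds to corestriction. The localization sequence in $K$-theory writes $K_{2n}(F)$ in terms of $K_{2n}$ of the ring of integers and the ``tame'' groups $K_{2n-1}$ of residue fields, and the transfer between finite fields is surjective on $K_{2n-1}$, so after a short diagram chase one is reduced to computing the cokernel of corestriction on $H^2_{\et}$ of $\mathcal{O}_K[1/\ell]$ versus $\mathcal{O}_L[1/\ell]$.

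\emph{Step 2 (local cokernels).} At a finite place $v$, local Tate duality identifies $H^2(K_v,\ZZ_\ell(n+1))$ with the Pontryagin dual of $H^0(K_v,\QQ_\ell/\ZZ_\ell(-n))$; corestriction is dual to restriction on $H^0$, which is injective, so corestriction on $H^2$ is surjective and the local cokernel vanishes for every $n\ge 1$. Complex places contribute nothing, and a real place that stays real in $L$ gives a surjective transfer. If $v$ is real and becomes complex, the composite $K_{2n}(\RR)\to K_{2n}(\CC)\to K_{2n}(\RR)$ of restriction followed by transfer is multiplication by $2$, and $K_{2n}(\CC)$ is uniquely divisible by Suslin rigidity, so the image of the transfer is a divisible subgroup containing $2K_{2n}(\RR)$, hence equals the maximal divisible subgroup of $K_{2n}(\RR)$, with quotient the finite $2$-group $K_{2n}(\RR)_{\rm tors}=K_{2n}(K_v)_{\rm tors}$.

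\emph{Step 3 (global assembly).} Put $M=\ZZ_\ell(n+1)$ (at finite level $\mu_{\ell^\nu}^{\otimes(n+1)}$), with $S$ large enough to contain $\ell$, the archimedean places and the places ramified in $L$, and $S_L$ the places of $L$ above $S$. The nine-term Poitou--Tate exact sequence
$$\cdots\to H^1(G_{L,S_L},M^D)^{\vee}\to H^2(G_{L,S_L},M)\to\bigoplus_{w\in S_L}H^2(L_w,M)\to H^0(G_{L,S_L},M^D)^{\vee}\to 0$$
maps, via corestriction, into its analogue over $K$; a diagram chase fed by Step 2 shows that in $\coker N_{L/K}$ only the contribution $\bigoplus_{v\in S_r}K_{2n}(K_v)_{\rm tors}$ survives, and in particular that an element of $K_{2n}(K)$ which is locally a norm everywhere is globally a norm. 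The hard part is exactly this last point: one needs the vanishing, or the precise matching, of the Tate--Shafarevich kernels $\ker\!\big(H^i(G_{K,S},M)\to\prod_v H^i(K_v,M)\big)$ and of the $H^1$-terms in the two Poitou--Tate sequences — that is, the full force of arithmetic duality for number fields — together with a separate and more delicate treatment of $\ell=2$, since there the real places already intervene in the \'etale cohomology of $\mathcal{O}_K[1/2]$ and the Quillen--Lichtenbaum comparison is subtler.
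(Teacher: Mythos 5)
First, a point of comparison: the paper does not actually prove this statement. Theorem \ref{normfield} is imported verbatim from Lim \cite[Theorem 1.1]{meng}, with the $K_2$ case credited to Colliot-Th\'el\`ene \cite[Lemma 2]{colliot} and Bak--Rehmann \cite[Theorem 1]{BakRehmann}; there is no argument in the text to match your sketch against. Measured against the cited source, your outline does follow the right strategy (Quillen--Lichtenbaum, place-by-place local duality, Poitou--Tate globally), and Steps 1 and 2 are essentially sound: the identification of the $\ell$-primary part of $K_{2n}$ with $H^2_{\et}(\mathcal{O}_F[1/\ell],\ZZ_\ell(n+1))$, the surjectivity of transfers on $K_{2n-1}$ of finite fields, the local Tate duality argument making corestriction on $H^2(K_v,-)$ surjective at finite places, and the divisibility argument at a real place that complexifies (using that $K_{2n}(\RR)$ is a divisible group plus a finite $2$-group) are all correct.

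The gap is Step 3, and you flag it yourself: ``the hard part is exactly this last point.'' Exactness at $K_{2n}(K)$ --- that an element killed in every $K_{2n}(K_v)_{\rm tors}$ for $v\in S_r$ is a global norm --- is the entire content of the theorem, and ``a diagram chase fed by Step 2'' does not produce it. Step 2 only controls local cokernels; to conclude globally you must actually compare the two Poitou--Tate sequences for $K$ and $L$, including their $H^1$-terms and Shafarevich--Tate kernels, and handle $\ell=2$ where the real places already enter the \'etale cohomology of $\mathcal{O}_K[1/2]$ --- all of which you list as needed but do not carry out. The surjectivity of $i$ onto $\bigoplus_{v\in S_r}K_{2n}(K_v)_{\rm tors}$ is likewise asserted rather than proved and is a separate global-to-local statement. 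As a roadmap to Lim's proof the sketch is accurate; as a proof it stops exactly where the difficulty begins, so the honest course here is to do what the paper does and cite \cite{meng}, or else to supply Step 3 in full.
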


\begin{proof}[Proof of Theorem \ref{mainth2}]

If $K$ contains a primitive $p$-th root of unity $\zeta_p$, then the identification \eqref{identif} is guaranteed. Take $L$ to be the splitting field over $K$ of the polynomial $f(x) = x^p - \omega$ for some $\omega \in K$ that is not a $p$-th power in $K$. This is a Galois extension, since $K$ has characteristic zero.  By construction, $f(x)$ is irreducible over $K$, and taking the subextension $K(\beta)$ for some root $\beta$ of $f(x)$, we find that $p = [K(\beta) : K]=[L : K]$. On the other hand, since $K$ is totally imaginary, the exact sequence  of Theorem \ref{normfield} shows that the norm map $N_{L/K}$ is surjective; hence   Theorem \ref{mainthm} applies.

Suppose now that $K$ does not contain a primitive $p$-th root of unity. Let $K' = K(\zeta_p)$ be the cyclotomic extension, and let $L$ be the splitting field over $K'$ as constructed above. Theorem \ref{colliogalois} tells us in particular that a class $\alpha' \in K_2(K(X))$ comes from $K_2(K)$ if and only if its image $\alpha'_{|K'(X)}$ in $K_2(K'(X))$ comes from $K_2(K')$. Therefore, by applying Lemma \ref{liftbase} to $K'$, we may assume that $K$ contains a primitive $p$-th root of unity, and proceeding as above, we  can again apply Theorem \ref{mainthm}.
\end{proof}

\section{Further Comments and Remarks}

We present some remarks on the assumptions stated in Theorem \ref{mainthm}, offering additional cases with alternative approaches to prove them:

\textbf{(a)} Regarding the assumption on the base field $K$, Theorem \ref{mainthm} can  be proved without any  restriction on the existence of $p$-th roots of unity, more precisely we have the following result.

\begin{proposition}
The statement of  Theorem \ref{mainthm}  holds  under the assumption that $L/K$ is finite of  degree $[L:K]=np$, with $n \wedge (p-1)=1$, and where neither  $K$ nor  $L$  necessary contains a primitive $p$-th root of unity.
\end{proposition}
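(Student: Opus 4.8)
The plan is to reduce the general degree-$np$ case with $n \wedge (p-1) = 1$ to the situation already handled in Theorem \ref{mainthm}, by passing to the $p$-th cyclotomic extension and exploiting the coprimality hypothesis to control the degrees. First I would set $K' = K(\zeta_p)$ and $L' = L(\zeta_p) = L \cdot K'$; since $[K':K]$ divides $p-1$ and is therefore coprime to $n$, and since $p \mid [L:K]$, a standard degree count shows that $p$ still divides $[L':K']$ and that $L'/K'$ remains Galois. Moreover $[L':K]$ is divisible by $[K':K]$, which is prime to $p$, so the "prime-to-$p$" part of the tower is harmless for our $p$-torsion class.

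The second step is to verify that condition (iii) descends to $K'(X)$: by Theorem \ref{colliogalois} applied to the Galois extension $K'/K$, the image $\alpha'$ of $\alpha$ in $K_2(K(X))$ comes from $K_2(K)$ if and only if $\alpha'_{|K'(X)}$ comes from $K_2(K')$. Then I would check condition (ii), the surjectivity of $N_{L'/K'}: K_2(L') \to K_2(K')$. Here I cannot invoke the number-field results, so instead I would argue as follows: over $K'$ the class $\alpha'_{|K'(X)}$ lies in $K_2(K')$, so it suffices to kill it after a finite locally free cover of $X_{K'}$. Writing $e = [K':K]$, which is coprime to $p$, the composite $K_2(K') \xrightarrow{i} K_2(L') \xrightarrow{N_{L'/K'}} K_2(K')$ is multiplication by $[L':K']$; since the class $\alpha'$ we care about is $p$-torsion mod the image, and $p \mid [L':K']$, the relevant transfer-and-restriction computation — exactly as in the proof of Theorem \ref{mainthm}, using $N_G = N_{L'/K'} \circ i$ on $K_2(L'(X))$ with $G = \gal(L'/K')$ — forces the restriction of $\alpha'$ to $K_2(M')_p$ to vanish, where $M' = L' \cdot K'(X) = L'(X)$. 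In other words, I would not need the full surjectivity of $N_{L'/K'}$; the coprimality $n \wedge (p-1) = 1$ guarantees that the only obstruction to the degree argument is that $p \mid [L':K']$, which holds.

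Concretely, having arranged that $K'$ contains $\zeta_p$ and that $L'/K'$ is finite Galois with $p \mid [L':K']$, I would then apply Lemma \ref{liftbase} (descent of Azumaya representability along the base change $X_{K'} \to X$) to reduce to proving the result for $X_{K'}$ over $K'$. For $X_{K'}$, the compositum $M' = L'(X)$ is unramified over $X_{K'}$, giving a finite \'etale cover $f: Y \to X_{K'}$ with $K(Y) = M'$, and the same commutative-diagram chase as in the proof of Theorem \ref{mainthm} — using the norm residue isomorphism (Theorem \ref{norm}), the identification \eqref{identif} over $K'$, the transfer formula $N_G = N_{L'/K'} \circ i_{K'/L'}$, and Lemmas \ref{mono} and \ref{lift} to see $\Br'(Y)_p \hookrightarrow H^2_\et(M', \mu_p)$ — shows that $\alpha_{|Y} = 0$ in $\Br'(Y)$. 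Lemma \ref{galois} then yields $\alpha \in \Br(X_{K'})$, and Lemma \ref{liftbase} gives $\alpha \in \Br(X)$.

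The main obstacle I anticipate is the bookkeeping around condition (ii): replacing $K$ by $K'$ may genuinely destroy the surjectivity of the norm map, so the proof must be re-run with the weaker input "$p$ divides $[L':K']$" in place of surjectivity, and one has to check carefully that this weaker hypothesis still suffices for the diagram chase — the point being that the class in question is already $p$-torsion (indeed lies in $K_2(K')$ modulo nothing), so only the $p$-divisibility of the degree, not full surjectivity, is used. A secondary technical point is ensuring that $[L':K']$ is still divisible by $p$ after adjoining $\zeta_p$; this is where the hypothesis $n \wedge (p-1) = 1$ is essential, since $[K':K] \mid p-1$ is coprime to the $p$-part of $[L:K]$, so no cancellation of the factor $p$ can occur in the tower $K \subset K' \subset L'$.
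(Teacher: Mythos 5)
Your reduction to $K' = K(\zeta_p)$ via Theorem \ref{colliogalois} and Lemma \ref{liftbase} matches the paper's first step, but the core of your argument has a genuine gap: you claim that over $K'$ the surjectivity of the norm (condition (ii)) can be dropped and replaced by the mere fact that $p$ divides $[L':K']$. This does not work. In the proof of Theorem \ref{mainthm}, the vanishing of $\alpha'$ in $K_2(M)_p$ is obtained by \emph{first} writing $\alpha' = N_{L/K}(\alpha'')$ --- which requires surjectivity of the norm --- and only then applying $i_{K/L} \circ N_{L/K} = N_G$, whose output is divisible by $[L:K]=np$. The composition in the other order, $N_{L'/K'} \circ i_{K'/L'} = [L':K']\cdot \mathrm{id}$, which is what your ``transfer-and-restriction computation'' actually invokes, lands back in $K_2(K')$ and says nothing about whether the \emph{restriction} of $\alpha'$ is divisible by $p$ in $K_2(L'(X))$. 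Indeed the exact sequence \eqref{norminj} quoted in Section 5(c) shows that the kernel of $K_2(K(X))_p \to K_2(L(X))_p$ is exactly $K(X)^*/N L(X)^*$: a class coming from $K_2(K)$ that is not a norm will in general survive restriction to an extension of degree divisible by $p$, so ``$p$-torsion plus $p \mid [L':K']$'' cannot force the vanishing you need. A symptom of the problem is that your only use of the hypothesis $n \wedge (p-1)=1$ is to guarantee that $p$ still divides $[L':K']$; that is automatic, since $[K':K]$ and $[L':L]$ divide $p-1$ and are therefore prime to $p$ with no assumption on $n$ at all.

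The paper's proof fills exactly this hole. Setting $M' = L\cdot K'$ (Galois over $K'$ with group $G=\gal(L/K)$, using $L \cap K' = K$, which does use $\gcd(np,p-1)=1$), one shows that every relevant ($\gal(K'/K)$-invariant) class $\alpha \in K_2(K')$ is a norm from $K_2(M')$: condition (ii) for $L/K$ gives $(p-1)\alpha = N_{M'/K'}(\beta)$ by a diagram chase, restriction--corestriction gives $np\,\alpha = N_{M'/K'}(\beta')$, and since $\gcd(np,p-1)=1$ a B\'ezout combination yields $\alpha = N_{M'/K'}(v\beta' + w\beta)$. This re-establishes over $K'$ precisely the surjectivity statement your argument tries to bypass, after which the proof of Theorem \ref{mainthm} and Lemma \ref{liftbase} apply as you describe. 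Without this (or an equivalent) step, your diagram chase does not close.
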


\begin{proof}
We  follow an argument  of  Colliot-Th\'el\`ene \cite[page 9]{colliot}; Let $K'=K(\zeta_p)$, and let $M'=L.K'$ be  the compositum of  $L$ and $K'$ inside some fixed algebraic closure of $K$. The extension $M'$ is Galois over $K'$  with the same Galois group $G=\gal(L/K)$. Consider the following commutative diagram

\begin{equation}
\centerline{\xymatrix{  K_2(M')  \ar[d]_{N_{M'/K'}}  \ar[rr]^{N_{M'/L}} & &   K_2(L)     \ar[d]_{N_{L/K}}  \ar[rr]^{i_{L/M'}}  &&  K_2(M') \ar[d]_{N_{M'/K'}}     \\
 K_2(K') \ar@/_2pc/[rrrr]_{N_{G'}}   \ar[rr]_{N_{K'/K}} &&   K_2(K)      \ar[rr]_{i_{K/K'}}    &&   K_2(K')   }}
\end{equation}
where $N_{G'}=\sum_{g \in G'} g$, and $G'=\gal(K'/K)$ as in the proof of Theorem \ref{mainthm}. The two  compositions of horizontal arrows  are  multiplication by $[M':L]=[K':K]=p-1$. Now take a class $\alpha \in  K_2(K')^G$.  Since $\alpha$ is fixed by $G$, a diagram chasing shows that there is a class $\beta \in K_2(M') $ such that
$$N_{G}(\alpha)= \sum_{g \in G} g(\alpha)=(p-1)\alpha=  N_{M'/K'}(\beta).$$

On the other hand, from the  composition 
$$   \  K_2(K')   \lra   K_2(M')  \overset{N_{M'/K'}}\lra   K_2(K'),  $$
we see that $[M':K']\alpha=np\alpha=N_{M'/K'}(\beta')$, for some  $\beta \in  K_2(M') $. Since $np$ and $p-1$ are coprime,  by B\'ezout's  lemma  there exist integers $v$ and $w$ such that $N_{M'/K'}(v\beta'+w\beta)=\alpha$. Now, the image of the zero class of $ K_2(K(X)) / K_2(K)$  under the isomorphism of Theorem \ref{colliogalois}, comes exactly from $(K_2(K'))^G$. The remainder of the proof is then the same as in  the proof of Theorem \ref{mainthm},  one  need only  replace $K$ by $K'$ and apply lemma \ref{liftbase}.
\end{proof}

\textbf{(b)} If the primitive $p$-th root of unity in Theorem \ref{mainthm} is contained in $L$ and not necessarily in $K$, then we can replace condition $(ii)$ with a sufficient one, which is the existence of a finite extension $M/L$ with a surjective usual norm map $N: M^* \ra L^*$. Indeed, by   Lemma \ref{liftbase} and Theorem \ref{colliogalois} , we can replace $K$ by $L$ as in the proof of Theorem \ref{mainth2}. On the other hand, since $\zeta_p \in L$, then by \cite[Theorem 6.8]{webl} any element $\alpha \in K_2(L)$ can be written as a symbol $\{g, \zeta_p\}$ for some $g \in L^*$. Therefore, by the projection formula, we get
$$\alpha = \{g, \zeta_p\} = \{N(f), \zeta_p\} = N_{M/L}\{f, \zeta_p\}$$
for some $f \in M^*$, which gives the surjectivity of the norm map $N_{M/L}:  K_2(M) \ra  K_2(L)$.

\textbf{(c)} The surjectivity of the usual norm map for the extension $L(X)/K(X)$ does not work well for Theorem \ref{mainthm}. More precisely, suppose that $L/K$ is cyclic of prime degree $p$, such that the norm map $N: L(X)^* \ra K(X)^*$ is surjective. We can write $L = K(b)$, with $b^p = a \in K^*$, and we then have an exact sequence (see \cite[page 7]{colliot})

\begin{equation}\label{norminj}
0 \lra K(X)^*/N L(X)^*  \overset{\phi}\lra K_2(K(X))_p \lra K_2(L(X))_p,
\end{equation}
where $\phi$ is the map that sends $f \in K(X)^*$ to the class of the cyclic algebra $A^{\zeta}(a, f)$. Now, although the surjectivity of $N$ implies, by \cite[Proposition 6.6.2]{webl}, the surjectivity of $N_{L/K}: K_2(L(X)) \ra K_2(K(X))$, the exactness of the  sequence \eqref{norminj} means that in such a case, only trivial classes of $\Br'(X)_p$ are in $\Br(Y)$.

\end{document}